\numberwithin{equation}{section}
\theoremstyle{plain}
\newtheorem{theorem}{Theorem}[section]
\newtheorem{lemma}[theorem]{Lemma}
\newtheorem{proposition}[theorem]{Proposition}
\theoremstyle{definition}
\newtheorem{definition}[theorem]{Definition}
\newtheorem{remark}[theorem]{Remark}
\begin{document}



\title[Nonlocal NLS around standing wave solutions]
{Large-time existence results for the nonlocal NLS around ground state solutions}

\author[Hideo Takaoka, Toshihiro Tamaki]{Hideo Takaoka and Toshihiro Tamaki}

\address{Hideo Takaoka \newline
\indent Department of Mathematics, Kobe University, Kobe, 657-8501, Japan}
\email{takaoka@math.kobe-u.ac.jp}

\address{Toshihiro Tamaki \newline
\indent Department of Mathematics, Kobe University, Kobe, 657-8501, Japan\newline
\indent Structural Analysis Section, C.A.E. Technical Department, Kawasaki Technology Co. Ltd.,\newline
\indent 3-1, Kawasaki-cho, Akashi, 673-0014, Japan}
\email{ttamaki@math.kobe-u.ac.jp}

\subjclass[2020]{35Q51, 35Q55, 42B37.}
\keywords{nonlocal nonlinear Schr\"odinger equation, long-time existence, ground state standing waves.}

\begin{abstract}
This paper discusses about solutions of the nonlocal nonlinear Schr\"odinger equation.
We prove that the solution remains close to the orbit of the soliton for a large-time, if the initial data is close to the ground state solitons. 
The proof uses the hyperbolic dynamics near ground state, which exhibits properties of local structural stability of solutions with respect to the flows of the nonlocal nonlinear Schr\"odinger equation.
\end{abstract}

\maketitle

\section{Introduction}\label{sec:intro}

In this paper, we consider the Cauchy problem for the following focusing nonlocal nonlinear Schr\"odinger equation
\begin{equation}\label{eq:nnls}
\begin{gathered}
i\partial_t u(t,x)-\partial_x^2 u(t,x)=u(t,x)^2u^{\star}(t,x),\quad (t,x)\in\mathbb{R}^2,\\
u(0,x)=u_0(x),\quad x\in\mathbb{R},
\end{gathered}
\end{equation}
where $\star$ indicates complex conjugate with the reflection in the $x$-axis, namely $f^{\star}(x)=\overline{f(-x)}$.
This equation was first introduced by Ablowitz and Musslimani \cite{ablowitz2} as a new nonlocal reduction of the Ablowitz-Kaup-Newell-Segur (AKNS) system \cite{ablowitz1}.
The physical aspect of model and their effect on expected metamagnetic structures are founded in \cite{gadzhimuradov}, and also in \cite{gurses,yang}.

The equation \eqref{eq:nnls} has an infinite number of conserved quantities.
For example, the quasipower
\begin{equation}\label{eq:mass}
M[u](t)=\frac12\int_{\mathbb{R}}u(t,x)u^{\star}(t,x)\,dx,
\end{equation}
and the Hamiltonian
\begin{equation*}
H[u](t)=\frac12\int_{\mathbb{R}}\partial_x(u(t,x))\partial_x(u^{\star}(t,x))\,dx-\frac{1}{4}\int_{\mathbb{R}}u(t,x)^2u^{\star}(t,x)^2\,dx
\end{equation*}
are conserved for solutions to \eqref{eq:nnls}.
Moreover, the equation \eqref{eq:nnls} is known as a Hamiltonian dynamical system under the symplectic form
\begin{equation*}
\omega(f,g)=\Im\int_{\mathbb{R}}f(x)g^{\star}(x)\,dx~\left(=-\Re\int_{\mathbb{R}}if(x)g^{\star}(x)\,dx\right),
\end{equation*}
namely
\begin{equation*}
\omega\left(\frac{\partial u}{\partial t},w\right)=d_uH[w],
\end{equation*}
where
\begin{equation*}
d_uH[w]=\left.\frac{d}{ds}\right|_{s=0}H[u+sw]=\Re\int_{\mathbb{R}}\left(-\partial_x^2u-u^2u^{\star}\right)w^{\star}\,dx.
\end{equation*}
Like for the local nonlinear Schr\"odinger equation
\begin{equation}\label{eq:nls}
i\partial_t u(t,x)-\partial_x^2 u(t,x)=u(t,x)^2\overline{u(t,x)},\quad (t,x)\in\mathbb{R}^2,
\end{equation}
the equation \eqref{eq:nnls} is PT (parity-time) symmetric, namely, the equation \eqref{eq:nnls} is invariant under the transform of $(t,x,u)\to (-t,-x,u^*)$.
In contrast to the equation \eqref{eq:nnls}, the equation \eqref{eq:nls} is expressed as a Hamiltonian flow of the form endowed by the standard symplectic form
\begin{equation*}
\omega_{\mathrm{NLS}}\left(\frac{\partial u}{\partial t},w\right)=d_uH_{\mathrm{NLS}}[w],
\end{equation*}
where
\begin{equation*}
H_{\mathrm{NLS}}[u](t)=\frac12\int_{\mathbb{R}}\partial_xu(t,x)\partial_x\overline{u(t,x)}\,dx-\frac{1}{4}\int_{\mathbb{R}}u(t,x)^2\overline{u(t,x)}^2\,dx
\end{equation*}
and
\begin{equation}\label{eq:Omega}
\omega_{\mathrm{NLS}}(f,g)=\Im\int_{\mathbb{R}}f(x)\overline{g(x)}\,dx~\left(=-\Re\int_{\mathbb{R}}if(x)\overline{g(x)}\,dx\right).
\end{equation}
Both equations \eqref{eq:nnls} and \eqref{eq:nls} are known as a completely integrable model, which have a Lax pair and an infinite number of  conservation laws.
Then the inverse scattering transform was applied to construct a variety of solutions for the case of rapidly decaying initial data \cite{ablowitz3,zakharov}.

Without the phase modulation term $e^{i\theta} ~(\theta\in\mathbb{R})$, equations \eqref{eq:nnls} and \eqref{eq:nls} have a family of one-parameter family of soliton solutions with smooth and rapidly decaying at infinity
\begin{equation}\label{eq:standing}
e^{-it\alpha^2}Q_{\alpha}(x)\quad (\alpha>0),
\end{equation}
where $Q_{\alpha}(x)=2\sqrt{2}\alpha/(e^{\alpha x}+e^{-{\alpha x}})$.
We emphasize that the equation \eqref{eq:nnls} is an interesting one possessing  a two-parameter family of solitons
\begin{equation}\label{eq:standing-2}
u_{\alpha,\beta}(t,x)=\frac{\sqrt{2}(\alpha+\beta)}{e^{i\alpha^2 t+\alpha x}+e^{i\beta^2 t-\beta x}} \quad (\alpha,\beta>0),
\end{equation}
which differs from the local nonlinear Schr\"odinger equation \eqref{eq:nls}.
The soliton solutions \eqref{eq:standing} are represented by the choice of  the parameter $\alpha=\beta$ in \eqref{eq:standing-2}, namely $u_{\alpha,\alpha}(t,x)=e^{-it\alpha^2}Q_{\alpha}(x)$.

It is known that the Cauchy problem to \eqref{eq:nls} is local well-posed in $H^s$ for $s\ge0$ (e.g., see \cite{cazenave,tsutsumi}).
In a similar way with some minor changes, the local well-posedness for \eqref{eq:nnls} holds in the same space.
In \cite{genoud}, Genoud proved the existence of finite-time blow-up soliton solutions to \eqref{eq:nnls} of the form \eqref{eq:standing-2} with arbitrarily small initial in $H^s$ for $s\ge 0$, though the Cauchy problem for the local nonlinear Schr\"odinger equation \eqref{eq:nls} is globally well-posed in $H^s$ for $s\ge 0$. 
Moreover, it is proved in \cite{genoud} that the soliton solutions of the form \eqref{eq:standing} are unstable by blowing up singularities near the origin for solution \eqref{eq:standing-2}. 
What is remarkable is that this type of strong blow-up instability of soliton solutions can not be observed for the local nonlinear Schr\"odinger equation \eqref{eq:nls}.
A suitable concept for continuation of weak solutions beyond possible blow-up was observed by Rybalko \cite{rybalko}.

We refer to the papers \cite{chen,zhao} for global existence results.
In \cite{chen}, Chen, Liu and Wang proved the global existence and uniqueness of the solutions for data belonging to some super-critical function spaces $E^{s}_{\sigma}$.
Recently, the existence of global solutions in the weighted Sobolev space $H^{1,1}$ with the smallness assumption on the $L^1$ norm was obtained by Zhao and Fan \cite{zhao}, based upon the inverse scattering theory.
One addresses with further remarks on global existence results.
In \cite{okamoto}, Okamoto and Uriya considered the final state problem for the nonlocal nonlinear Schr\"odinger equations in which the nonlinear part of \eqref{eq:nnls} is replaced by dissipative nonlinearity of the form $\lambda u(t,x)^2u^{\star}(t,x)$ for $\lambda\in\mathbb{C},~\Im\lambda\ne 0$.
They proved the asymptotic behavior of solutions as time goes to infinity, by observing that the asymptotic profile may depend on the solutions.  

Let us take $u_{\alpha,\alpha}(t,x)$ in \eqref{eq:standing} and $u_{\alpha,\beta}(t,x)$ in \eqref{eq:standing-2}, for purpose of studying the large time existence of soliton that is close to the soliton solutions for initial data $u_0$, not small, but close to $Q_{\alpha}$.
If  one chooses $\alpha>0$ and $\beta>0$ in \eqref{eq:standing} and \eqref{eq:standing-2} such that $|\alpha-\beta|>0$ is small enough, then
\begin{equation*}
\|u_{\alpha,\alpha}(0,\cdot)-u_{\alpha,\beta}(0,\cdot)\|_{H^k}\sim C_k|\alpha-\beta|
\end{equation*}
for any $k\in \mathbb{N}\cup\{0\}$.
As pointed out in \cite[Theorem 2]{genoud}, $u_{\alpha,\beta}(t)$ blows up in $L^{\infty}$ as $|t|\to T_{\alpha,\beta}$, more precisely,
\begin{equation*}
\lim_{t\to \pi/(\alpha^2-\beta^2)}u_{\alpha,\beta}(t,0)=\infty,
\end{equation*}
where $T_{\alpha,\beta}=\pi/(\alpha^2-\beta^2)$.
In particular, the solution $u_{\alpha,\beta}(t)$ blows up in $H^k$ at $|t|=T_{\alpha,\beta}$, that is
\begin{equation}\label{eq:blowup}
\lim_{t\to T_{\alpha,\beta}}\|u_{\alpha,\alpha}(t,\cdot)-u_{\alpha,\beta}(t,\cdot)\|_{H^k}=\infty,\quad \lim_{t\to T_{\alpha,\beta}}\|u_{\alpha,\beta}(t,\cdot)\|_{H^k}=\infty.
\end{equation}
At the same time, it should be emphasized that
\begin{equation}\label{eq:blowup2}
\inf_{\theta\in\mathbb{R}}\|u_{\alpha,\alpha}(t,\cdot)-e^{i\theta}u_{\alpha,\beta}(t,\cdot)\|_{H^k}\ge C_k |\alpha-\beta||t|
\end{equation}
holds at least as long as $t$ stays in $1\lesssim |t|\ll 1/|\alpha-\beta|$.
The estimate \eqref{eq:blowup2} implies the orbital instability of $u_{\alpha,\alpha}$.

The purpose of this paper is to analyze the details in the dynamics of the solution near the solitons, which is somewhat striking analogy on the soliton solutions observed.
We investigate the large time stability of solitons $u_{\alpha,\alpha}$ in a neighborhood that does not contain any $u_{\alpha,\beta}~(\alpha\ne\beta)$. 
As far as we know, there is no large time stability of $u_{1,1}$ to \eqref{eq:nnls} by approach as a complete integral model.

For simplicity, and without loss of generality, we shall consider the large time stability of solitons $u_{1,1}$ for initial data $u_0$ close to $Q_{1}$.

Before stating our main result, we first introduce the following space of initial data.

\begin{definition}\label{def:space}
We set $Q=Q_1,~Q'_{\alpha}=\partial_{\alpha}Q_{\alpha}$ and $Q'=Q'_{\alpha}|_{\alpha=1}$, which lead $Q'=(1+x\partial_x)Q$.

For $\varepsilon>0$, we let the function space $\mathcal{S}_{\varepsilon}$ of $H^1$ be an initial data set such that for $f\in H^1$,  $f$ belongs to space $\mathcal{S}_{\varepsilon}$ if and only if
\begin{equation}\label{eq:f}
\left\|f-\left(Q+A_e iQ+B_eQ'+A_oi\partial_xQ\right)\right\|_{H^1}\le \varepsilon^2,
\end{equation}
for $A_e,~B_e,~A_o\in\mathbb{R}$ such that $|A_e|+|B_e|\le |A_o|=\varepsilon$.
\end{definition}
\begin{remark}\label{rem:space}
If we consider the proper symplectically orthogonal decomposition based on Proposition $\ref{prop:eighenvalue}$ which will be described later, then the function $f$ in \eqref{eq:f} satisfies
$$
f(x)=Q(x)+a_e iQ(x)+b_eQ'(x)+a_oi\partial_xQ(x)+b_oxQ(x)+g(x),
$$
for $a_e,~b_e,~a_o,~b_o\in\mathbb{R}$ and $g\in H^1$ such that $|a_e|+|b_e|\lesssim \varepsilon,~\ |a_o|\sim\varepsilon,~|b_o|+\|g\|_{H^1}\lesssim\varepsilon^2$ and
$$
\Re\int_{\mathbb{R}}g(x)Q(x)\,dx=0,\quad \Im\int_{\mathbb{R}}g(x)Q'(x)\,dx=0,
$$
$$
\Re\int_{\mathbb{R}}g(x)\partial_xQ(x)\,dx=0,\quad \Im\int_{\mathbb{R}}g(x)xQ(x)\,dx=0.
$$
Moreover, we easily see that
\begin{equation}\label{eq:data-ex}
u_{1,\beta}(0,\cdot)\not\in \mathcal{S}_{\varepsilon},
\end{equation}
if $|1-\beta|\sim \varepsilon\ll 1$.
In addition,
\begin{equation}\label{eq:data-diff}
d(f,Q)\sim \varepsilon
\end{equation}
for $f\in \mathcal{S}_{\varepsilon}$ provided $\varepsilon\ll 1$, where $d(f,Q)$ is the distance function from the grand state $Q$ as
\begin{equation*}
d(f,Q)=\inf_{\theta\in\mathbb{R}}\|f-e^{i\theta}Q\|_{H^1}.
\end{equation*}
We will prove \eqref{eq:data-ex} and \eqref{eq:data-diff}  in Appendix.
\end{remark}

The main result of the paper is the following, which hopefully clarifies the differences in \eqref{eq:blowup2}.

\begin{theorem}\label{thm:main}
There exists $\varepsilon_0>0$ such that for any $0< \varepsilon\le \varepsilon_0$, if $u_0\in\mathcal{S}_{\varepsilon}$, then there exists a unique solution $u(t)$ to \eqref{eq:nnls} on $[-T_{\varepsilon},T_{\varepsilon}]$, where $T_{\varepsilon}=c \log(1/\varepsilon)$ with $0<c\ll 1$ independent of $\varepsilon$.
Moreover, the solution $u(t)$ satisfies
\begin{equation}\label{eq:stab}
\sup_{|t|\le T_{\varepsilon}}d(u(t),Q)\lesssim \varepsilon.
\end{equation}
\end{theorem}

\begin{remark}
In Theorem $\ref{thm:main}$, the large time stability of solitons $u_{1,1}$ is achieved by assuming the initial data in $\mathcal{S}_{\varepsilon}$.
We do not know whether the function $\mathcal{S}_{\varepsilon}$ of initial data is the best setting for the large time stability of such solitons.
\end{remark}

\begin{remark}
The explicit formulae of solution $u(t)$ obtained in Theorem $\ref{thm:main}$ is founded in \eqref{eq:explicit} 
described later.
\end{remark}

\begin{remark}
By taking $\alpha=1$ in \eqref{eq:blowup2} and putting $\varepsilon=|1-\beta|$, we have
\begin{equation}\label{eq:ccc}
d(u_{1,\beta}(t),Q)\ge C_1\varepsilon  |t|
\end{equation}
for $1\lesssim |t|\ll 1/\varepsilon$. 
As seen in Theorem $\ref{thm:main}$, the solution might deviate from the orbit of $u_{1,1}$.
However, it is worth to remark that solution stay in the $\varepsilon$-neighborhood of the orbit of $u_{1,1}$ at least until $T_{\varepsilon}$, comparing the corresponding order of deviation time for $u_{1,\beta}$ with the size of $d(u_{1,\beta},Q)$ in \eqref{eq:ccc}.
\end{remark}

In Theorem $\ref{thm:main}$, we restrict ourselves to $H^1$ solutions.
One can lower the regularity than $s=1$ in the theorem.
We do not pursue this direction in this paper. 

In this paper, we focus on the study of large time existence of solutions to the nonlinear Schr\"odinger equation with the nonlocal cubic nonlinearity, where the global existence of solutions with small initial data was constructed by the inverse scattering transform method.
Incidentally, the cubic nonlinearity is crucial in one space dimensional in view of a description of the asymptotic behavior of solutions.
The proof of Theorem \ref{thm:main} does not use the linear stability of global solution $u_{\alpha,\alpha}$ to \eqref{eq:nnls}.
We do not know whether there are other nonlinearities having a physical meaning that the argument in the proof of Theorem \ref{thm:main} is applied.

The paper is organized as follows.
In Section \ref{sec:notation}, we collect some of the definitions and notations that we use throughout the paper.
In Section \ref{sec:preliminary}, some preliminary formulations of the equations are described.
We consider small fluctuations around the ground state $Q$.
We obtain the symplectic decompositions both for even and odd function induced by two linearized operator around the ground state.
Section \ref{sec:proof} is devoted to the proof of Theorem \ref{thm:main}.  
In Appendix \ref{secA1}, we explain the calculation of the estimates in \eqref{eq:blowup} and \eqref{eq:blowup2}.

\section{Notation}\label{sec:notation}

For positive real number $a$ and $b$, the notion $a \lesssim b$ means that there is a constant $c$ such that $a \le cb$.
When $a \lesssim b$ and $b \lesssim a$, we write $a \sim b$.
The notion $a\ll b$ stands for $a\le cb$ for small constant $c>0$.
Moreover, $b+$ means $b+\varepsilon$ for $0<\varepsilon\ll 1$ small enough.
Similarly, $b-$ means $b-\varepsilon$ for $0<\varepsilon\ll1$ small enough.

Throughout the paper, $c$ and $C$ denote various constants the value of which may change from line to line. 

For $s \in \mathbb{R}$, we define the Sobolev spaces $H^s=H^s(\mathbb{R})$ equipped with the norm
\begin{equation*}
\| f \|_{H^s} = \left(\int_{\mathbb{R}} \langle \xi\rangle^{2s} |\widehat{f}(\xi)|^2 \,d\xi\right)^{1/2},
\end{equation*}
where  $\langle x \rangle = 1 + |x|$.
We also use $L^2=H^0$.
The $L^2$ space is endowed with the norm associated to the inner product 
\begin{equation*}
(u,v)=\int_{\mathbb{R}}u(x)\overline{v(x)}\,dx.
\end{equation*}
Let us also use the semi-inner product
\begin{equation*}
\langle u\mid v\rangle=\Re(u,v).
\end{equation*}
Following the notation in papers \cite{chang,weinstein1}, we make the following definitions
\begin{equation*}
\mathcal{L}_{\pm}v =   -\partial_x^2v+v-2Q^2v\pm Q^2\overline{v},
\end{equation*}
which is non self-adjoint operator.
We make the following definitions
\begin{equation*}
L_{+} =   -\partial_x^2+1-3Q^2,\quad L_{-} =   -\partial_x^2+1-Q^2.
\end{equation*}
Moreover, $\|W\|_{L^p}=\|u\|_{L^p}+\|v\|_{L^p}$ for $W=(u,v)^t\in L^p(\mathbb{R})\times L^p(\mathbb{R})$.

Let $\sigma_1,~\sigma_2, ~\sigma_3$ be the Pauli matrices such that
\begin{equation*}
\sigma_1=
 \begin{pmatrix}
     0 & 1 \\
     1 & 0
  \end{pmatrix},
  \quad
  \sigma_2=i
 \begin{pmatrix}
    0 & -1 \\
     1 & 0
  \end{pmatrix},
  \quad
\sigma_3=
 \begin{pmatrix}
     1 & 0 \\
     0 & -1
  \end{pmatrix},
\end{equation*}
which satisfy $\sigma_j^2=I$.

The dot notation refers to a time derivative such as $\dot{\theta}=\mathrm{d}\theta/\mathrm{d}t$.

We recall some previous results on the grand state $Q_{\alpha}$ in \eqref{eq:standing}. 
The grand state $Q_{\alpha}$ satisfies the equation
\begin{equation*}
-\partial_x^2Q_{\alpha}+\alpha^2 Q_{\alpha}=Q_{\alpha}^3.
\end{equation*}
$Q'=\partial_{\alpha}Q_{\alpha}|_{\alpha=1}=(1+x\partial_x)Q$ defined in Definition \ref{def:space} satisfies
\begin{equation}\label{eq:Q'}
(-\partial_x^2+1-3Q^2)Q'=-2Q.
\end{equation}

\section{Preliminary}\label{sec:preliminary}

\subsection{Reformulate the setup of the problem}

In this subsection, we try to reformulate the Cauchy problem \eqref{eq:nnls} as the perturbed initial value problem near soliton solutions by finding the formula
\begin{equation}\label{eq:explicit}
u(t,x)=e^{i\theta(t)}\left(Q_{\alpha(t)}(x)+v(t,x)\right).
\end{equation}
We begin with local well-posedness.
It is easy to check from the results in \cite{cazenave,tsutsumi} that the time local solution of the form \eqref{eq:explicit} exists.
We will state the following local well-posedness result without proof.

\begin{proposition}[\cite{cazenave,tsutsumi}]\label{prop:LWP}
The initial value problem \eqref{eq:nnls} is locally well-posed in $H^s$ for $s\ge 0$.
In particular, for $s\ge 0$, and any initial data $u_0\in H^s(\mathbb{R})$, there exists a time  $T=T(\|u_0\|_{L^2})$ such that the initial value problem \eqref{eq:nnls} has a unique solution $u(t)$ in the time interval $(-T,T)$ with
\begin{equation*}
u\in C((-T,T):H^1)\cap C^1((-T,T):H^{-1})\cap L^6((-T,T)\times \mathbb{R}).
\end{equation*}
\end{proposition}

Let us remark on the choice of identifying the phase rotation function $\theta(t)$ and the dilation function $\alpha(t)$ in \eqref{eq:explicit}.
For $r>0$, we consider the neighborhood of radius of $r$ around $Q$, modulo phase rotation and dilation
\begin{equation*}
U_{r}=\left\{u\in H^1 \mid d(u,Q)\le r\right\}.
\end{equation*}
In a similar way to \cite[Proposition 1]{martel}, we obtain the following lemma.

\begin{lemma}\label{lem:c^1map}
There exist small $\varepsilon_0>0$ and $\alpha_0>0$ such that there exists a unique $C^1$-map $(\theta,\alpha):U_{\varepsilon_0}\to \mathbb{R}\times (1-\alpha_0,1+\alpha_0)$, such that for $u\in U_{\varepsilon_0}$, we have
\begin{equation*}
\langle i(u-e^{i\theta}Q_{\alpha})\mid e^{i\theta}Q'_{\alpha}\rangle=\langle u-e^{i\theta}Q_{\alpha}\mid e^{i\theta}Q_{\alpha}\rangle= 0.
\end{equation*}
Moreover, there exists $C>0$ such that if $u\in U_{\varepsilon}$ with $\varepsilon\le \varepsilon_0$, then
\begin{equation}\label{eq:theta-diff}
\|u-e^{i\theta}Q_{\alpha}\|_{H^1}\le C\varepsilon,\quad |\alpha-1|\le C\varepsilon.
\end{equation}
\end{lemma}

\begin{proof}
Repeating the argument of \cite[Proposition 1]{martel}, we take functionals
\begin{equation*}
\begin{gathered}
\rho_{\theta,\alpha}^1(u)=\langle i(e^{-i\theta}u_{1/\alpha}-Q)\mid Q'\rangle,\\
\rho_{\theta,\alpha}^2(u)=\langle e^{-i\theta}u_{1/\alpha}-Q\mid Q\rangle,
\end{gathered}
\end{equation*}
where $u_{\alpha}$ is rescaled function as $u_{\alpha}(x)=\alpha u(\alpha x)$.
Since
\begin{equation*}
\begin{gathered}
\rho_{0,1}^1(Q)=\rho_{0,1}^2(Q)=0,\\
\left.\frac{\partial \rho_{\theta,\alpha}^1(u)}{\partial\theta}\right|_{(\theta,\alpha,u)=(0,1,Q)}=\langle Q\mid Q'\rangle=M(Q)>0,\\
\left.\frac{\partial \rho_{\theta,\alpha}^1(u)}{\partial\alpha}\right|_{(\theta,\alpha,u)=(0,1,Q)}=0,\\
\left.\frac{\partial \rho_{\theta,\alpha}^2(u)}{\partial\theta}\right|_{(\theta,\alpha,u)=(0,1,Q)}=0,\\
\left.\frac{\partial \rho_{\theta,\alpha}^2(u)}{\partial\alpha}\right|_{(\theta,\alpha,u)=(0,1,Q)}=M(Q)>0,
\end{gathered}
\end{equation*}
the implicit function theorem implies that there exists $\varepsilon_1>0$, a neighborhood $V_{0,1}$ of $(0,1)$ in $\mathbb{R}^2$ and a unique $C^1$-map $(\theta_1,\alpha_1):\{u\in H^1\mid \|u-Q\|_{H^1}<\varepsilon_1\}\to V_{0,1}~(u\mapsto (\theta_1(u),\alpha_1(u)))$ such that $\rho_{\theta_1,\alpha_1}^1(u)=\rho_{\theta_1,\alpha_1}^2(u)=0$.
Moreover, we see that there exists $C>0$ such that if $\|u-Q\|_{H^1}<\varepsilon_2$ with $\varepsilon_2\le \varepsilon_1$, then  the following estimates hold:
\begin{equation}\label{eq:alpha-diff}
|\theta_1|+|\alpha_1-1|<C\varepsilon_2,\quad\|e^{-i\theta_1}u_{\alpha_1}-Q\|_{H^1}<C\varepsilon_2.
\end{equation}

Once again, the implicit function theorem lets us find $\varepsilon_0\in(0,\varepsilon_1)$ and a unique map $\theta_2:U_{\varepsilon_0}\to\mathbb{R}~(u\mapsto (\theta_2(u)))$ such that for all $u\in U_{\varepsilon_0}$,
$$
\|u-e^{i\theta_2}Q\|_{H^1}=d(u,Q)<\varepsilon_0.
$$
Now put $\alpha(u)=\alpha_1(e^{-i\theta_2}u)$ and $\theta(u)=\theta_1(e^{-i\theta_2(u)}u)+\theta_2(u)$.
The estimate of \eqref{eq:theta-diff} follows from \eqref{eq:alpha-diff}.
Then we obtain the desired result.
\end{proof}

Let us take $u_0\in\mathcal{S}_{\varepsilon}$ for $\varepsilon>0$ so small that Lemma \ref{lem:c^1map} is valid.
Using Proposition \ref{prop:LWP} and Lemma \ref{lem:c^1map}, we have that there exist a constant $C>0$ and the $C^1$-map $(\theta,\alpha)=(\theta(t),\alpha(t))$ as long as the solution satisfies $d(u(t),Q)\le C\varepsilon$.
Moreover,
\begin{equation}\label{eq:constrain}
\langle iv(t)\mid Q'_{\alpha(t)}\rangle=\langle v(t)\mid Q_{\alpha(t)}\rangle=0,
\end{equation}
and
\begin{equation*}
\|v(t)\|_{H^1}\le C\varepsilon,\quad |\alpha(t)-1|\le C\varepsilon.
\end{equation*}

Substitution \eqref{eq:explicit} into \eqref{eq:nnls} yields the following equation:
\begin{equation}\label{eq:rnnls}
i\partial_t v+\mathcal{L}v=(1+\dot{\theta})v+(\alpha^2+\dot{\theta})Q_{\alpha}-i\dot{\alpha}Q_{\alpha}'+(Q_{\alpha(t)}^2-Q^2)(2v+v^{\star})+\mathcal{N}_{\alpha}(v,v^{\star}),
\end{equation}
where $\mathcal{L}$ is the linearized operator of \eqref{eq:nnls} at ground states
\begin{equation*}
\mathcal{L}v(t,x)  =   -\partial_x^2v(t,x)+v(t,x)-2Q(x)^2v(t,x)-Q(x)^2v^{\star}(t,x)
\end{equation*}
and nonlinear terms are as follows
\begin{equation*}
\mathcal{N}_{\alpha}(v,v^{\star})(t,x)  =N_{\alpha,1}(v)(t,x)+N_{\alpha,2}(v,v^{\star})(t,x)+N_{3}(v,v^{\star})(t,x),
\end{equation*}
\begin{equation*}
\begin{gathered}
N_{\alpha,1}(v_1)(t,x)=  Q_{\alpha(t)}(x)v_1(t,x)^2,\\
N_{\alpha,2}(v_1,v_2)(t,x)=  2Q_{\alpha(t)}(x)v_1(t,x)v_2(t,x),\\
N_{3}(v_1,v_2)(t,x)=  v_1(t,x)^2v_2(t,x).
\end{gathered}
\end{equation*}

Decomposing the function $v$ into even and odd parts as 
\begin{equation*}
v_e=v_e(t,x)=\frac{v(t,x)+v(t,-x)}{2},\quad
v_o=v_o(t,x)=\frac{v(t,x)-v(t,-x)}{2},
\end{equation*}
we will rewrite the equation \eqref{eq:rnnls} to the formula of linearized equations associated to \eqref{eq:nls}.
By \eqref{eq:rnnls}, $v_e$ and $v_o$ satisfy
\begin{equation}\label{eq:fg0}
i\dot{v}_{e}+\mathcal{L}_{-}v_e=(1+\dot{\theta})v_e+(\alpha^2+\dot{\theta})Q_{\alpha}-i\dot{\alpha}Q_{\alpha}'+(Q_{\alpha}^2-Q^2)(2v_e+\overline{v_e})+\mathcal{N}_{\alpha,e}(v_e,v_o),
\end{equation}
\begin{equation}\label{eq:fg1}
i\dot{v}_o+\mathcal{L}_{+}v_o=(1+\dot{\theta})v_o+(Q_{\alpha}^2-Q^2)(2v_o-\overline{v_o})+\mathcal{N}_{\alpha,o}(v_e,v_o),
\end{equation}
where $(\theta,\alpha)=(\theta(t),\alpha(t))$ and
\begin{equation}\label{eq:N_e}
\begin{split}
\mathcal{N}_{\alpha,e}(v_e,v_o)= & \frac12\left(\mathcal{N}_{\alpha}(v_e+v_o,\overline{v_e-v_o})+ \mathcal{N}_{\alpha}(v_e-v_o,\overline{v_e+v_o})\right)\\
= & Q_{\alpha}(v_e^2+v_o^2)+2Q_{\alpha}(|v_e|^2-|v_o|^2)+O(|v_e|^3+|v_o|^3)
\end{split}
\end{equation}
and
\begin{equation}\label{eq:N_o}
\begin{split}
\mathcal{N}_{\alpha,o}(v_e,v_o)= & \frac12\left(\mathcal{N}_{\alpha}(v_e+v_o,\overline{v_e-v_o})- \mathcal{N}_{\alpha}(v_e-v_o,\overline{v_e+v_o})\right)\\
= & 2Q_{\alpha}v_ev_o-4Q_{\alpha}\Im(v_e\overline{v_o})+O(|v_e|^3+|v_o|^3).
\end{split}
\end{equation}

Consider $V_e=(v_e,\overline{v_e})^t$ and $V_o=(v_o,\overline{v_o})^t$, and insert these into \eqref{eq:fg0}.
Then the equations \eqref{eq:fg0} turn to the system of equations as follows
\begin{equation}\label{eq:Mfg}
\begin{split}
i\partial_t V_e
+\mathcal{H}_{e} V_e
=& 
  (1+\dot{\theta})\sigma_3V_e+
  \left((\dot{\theta}+\alpha^2)Q_{\alpha}\sigma_3+i\dot{\alpha}Q_{\alpha}'\right)
(1,1)^t\\
  & +
  (Q_{\alpha}^2-Q^2)\sigma_3
  \begin{pmatrix}
   2 & 1 \\
    1 & 2
  \end{pmatrix}
V_e  
  +
\sigma_3
 \left ( \mathcal{N}_{\alpha,e}(v_e,v_o),   \overline{ \mathcal{N}_{\alpha,e}(v_e,v_o)}\right)^t
\end{split}
  \end{equation}
and
\begin{equation}\label{eq:Mfg1}
\begin{split}
i\partial_t V_o
+\mathcal{H}_{o} V_o
= &
 (1+\dot{\theta})\sigma_3 V_o  
 +
  (Q_{\alpha}^2-Q^2)\sigma_3
  \begin{pmatrix}
   2 & -1 \\
    -1 & 2
  \end{pmatrix}
V_o  \\
 & +
 \sigma_3
\left ( \mathcal{N}_{\alpha,o}(v_e,v_o) ,
     \overline{\mathcal{N}_{\alpha,o}(v_e,v_o)}\right)^t
\end{split}
\end{equation}
where
\begin{equation*}
\mathcal{H}_{e}=
 (-\partial_x^2+1)\sigma_3+ Q^2\begin{pmatrix}
     -2 & -1 \\
     1 & 2
  \end{pmatrix}
\end{equation*}
and $\mathcal{H}_{o}=\mathcal{H}^t_{e}$.

\subsection{Linearized operator}

In this subsection, we consider the linearized eigenvalue problem derived from the system \eqref{eq:Mfg}-\eqref{eq:Mfg1}.

The matrix Schr\"odinger operator $\mathcal{H}_{e}$ is non self-adjoint in the product space $L^2\times L^2$.
Let $\sigma_d(\mathcal{H}_{e})$ and $\sigma_{\mathrm{ess}}(\mathcal{H}_{e})$ be the discrete, essential spectrums to $\mathcal{H}_{e}$, respectively.
Then we have that the spectrum of $\sigma(\mathcal{H}_{e})$ is decomposed into disjoint pairs $\sigma_d(\mathcal{H}_{e})$ and $\sigma_{\mathrm{ess}}(\mathcal{H}_{e})$ as $\sigma(\mathcal{H}_{e})=\sigma_d(\mathcal{H}_{e})\cup\sigma_{\mathrm{ess}}(\mathcal{H}_{e})$.
The same holds for the operator $\mathcal{H}_{o}$. 

As usual, we use
$$
\mathcal{P}=\frac{1}{\sqrt{2}}
 \begin{pmatrix}
     1 & i \\
     1 & -i
  \end{pmatrix}
$$
to be an unitary matrix such that $\mathcal{P}\overline{\mathcal{P}^t}=  \overline{\mathcal{P}^t}\mathcal{P}=I$.
It follows from the straightforward calculation that
\begin{equation}\label{eq:Pt}
\begin{gathered}
\mathcal{P}^{-1}{\mathcal H}_{e} \mathcal{P}
=
i \begin{pmatrix}
     0 & L_{- }\\
     -L_{+} & 0
       \end{pmatrix},\\       
\mathcal{P}^{-1}{\mathcal H}_{o}\mathcal{P}
=
 -i\begin{pmatrix}
     0 & -L_{+} \\
     L_{-} & 0
       \end{pmatrix}.
       \end{gathered}
\end{equation}
We take advantage of the scaled unitary matrix $\mathcal{P}$.
By the expedient way done in \cite{weinstein1} also in \cite{nakanishi}, we work with real and imaginary parts of $v_e$ and $v_o$, respectively.
From the argument in \cite{weinstein1}, $L_{-}$ is nonnegative and $\ker L_{-}=\mathrm{span}\{Q\}$.
Moreover, $L_{+}(\partial_x Q)=0$, in which the function $\partial_x Q$ is odd.

Let us refer the results in \cite[Lemma 2.3]{chang} and \cite[Theorem B.3]{weinstein1}.
As results by \eqref{eq:Pt}, the following proposition is provided without proof, where $Q'$ means $Q'=(1+x\partial_x)Q$ defined as before.

\begin{proposition}[\cite{chang,weinstein1}]\label{prop:eighenvalue}
The essential spectrums of $\mathcal{H}_{e}$ and $\mathcal{H}_{o}$ are $(-\infty,-1]\cup[1,\infty)$ and there are no embedded eigenvalues or resonances in $(-\infty,-1)\cup(1,\infty)$, the thresholds $\pm 1$ are of the resonances, the root space at $0$ is of the dimension two, respectively.
In explicit form in \cite{weinstein1}, the root space of $\mathcal{P}^{-1}\mathcal{H}_{e}\mathcal{P}$ for even functions is
\begin{equation}\label{eq:rootH}
\mathrm{span}\left\{ \begin{pmatrix}
     0 \\
     Q 
       \end{pmatrix},~
 \begin{pmatrix}
     Q' \\
     0
       \end{pmatrix}
       \right\}.
\end{equation}
Moreover the corresponding root space of $\mathcal{P}^{-1}\mathcal{H}_{o}\mathcal{P}$ for odd functions is
\begin{equation}\label{eq:rootHt}
\mathrm{span}\left\{ 
\begin{pmatrix}
     0  \\
     \partial_xQ
       \end{pmatrix},~
 \begin{pmatrix}
     xQ \\
     0
       \end{pmatrix}
       \right\}.
\end{equation}
\end{proposition}

\begin{remark}
The root space of $\mathcal{P}^{-1}\mathcal{H}_{e}\mathcal{P}$ for odd functions is
\begin{equation*}
\mathrm{span}\left\{ \begin{pmatrix}
     \partial_xQ \\
      0
       \end{pmatrix},~
 \begin{pmatrix}
      0\\
     xQ
       \end{pmatrix}
       \right\}.
\end{equation*}
\end{remark}

\begin{remark}
It is well-known that the essential spectrum of $\mathcal{H}_e$ equals to $(-\infty,-1]\cup[1,\infty)$, and there are no eigenvalues in the essential spectrum (see Krieger and Schlag \cite{krieger}).
The presence of a resonance at the endpoint of the essential spectrum was studied in detail by Chang, Gustafson, Nakanishi and Tsai \cite{chang}.
We easily see that the relation \eqref{eq:Pt} implies the same thing as above holds for $\mathcal{H}_o$.
\end{remark}

\begin{remark}
From a technical perspective, we decompose the modulation perturbation into even and odd parts $(V_e,V_o)$, since the linearized operators for $(V_e,V_o)$ have better structures compared to the original operator $\mathcal{L}$. 
Indeed, these new operators are transformed into well-known $2\times 2$ operators \eqref{eq:Pt} whose spectral properties have extensively clarified.
This trick enable us to carry out the framework of \cite{krieger}.
\end{remark}

Keeping the above observation, we have the orthogonal projection operators on the range of continuous spectrum and of point spectrum.

\begin{definition}\label{def:orthogonal}
Let $P_{e,d}$ denotes the projector onto the root spaces \eqref{eq:rootH} of $\mathcal{H}_{e}$.
Define $P_{e,s}=I-P_{e,d}$.
In a similar, we define $P_{o,d},~P_{o,s}$ for $\mathcal{H}_{o}$ by \eqref{eq:rootHt}, respectively.
\end{definition}

The solution spaces for $v_e$ and $v_o$ of \eqref{eq:Mfg}-\eqref{eq:Mfg1} have the symplectic decompositions of $L^2(\mathbb{R})$ in the sense by Definition \ref{def:orthogonal}.
Namely, for even function $v_e$ and odd function $v_o$, we can write $v_e$ and $v_o$ as the following symplectic decompositions of $L^2(\mathbb{R})$ with respect to $\omega_{NLS}$ in \eqref{eq:Omega}, in the same sense to $\omega$
\begin{equation}\label{eq:fg}
\begin{gathered}
v_e(t,x)=a_e(t)i Q(x)+b_e(t)Q'(x)+\eta_{e}(t,x),\\
v_o(t,x)=a_o(t)i \partial_xQ(x)+b_o(t)xQ(x)+\eta_{o}(t,x),
\end{gathered}
\end{equation}
where
$$
a_e(t)=-\frac{\langle iv_e(t)\mid Q'\rangle}{\langle Q\mid Q'\rangle},\quad b_e(t)=\frac{\langle v_e(t)\mid Q\rangle}{\langle Q'\mid Q\rangle},
$$
$$
a_o(t)=-\frac{\langle iv_o(t)\mid xQ\rangle}{\langle \partial_xQ\mid xQ\rangle},\quad b_o(t)=\frac{\langle v_o(t)\mid \partial_xQ\rangle}{\langle xQ\mid \partial_x Q\rangle}
$$
are real-valued functions.
Here we decompose the dynamics of solutions $V_e$ (respectively $V_o$) for the system \eqref{eq:Mfg} (respectively \eqref{eq:Mfg1}) in three components by using \eqref{eq:rootH}  (respectively \eqref{eq:rootHt}).
Let $p_{e,d}v_e,~p_{e,s}v_e,~p_{o,d}v_o,~p_{o,s}v_o$ denote the first components of $P_{e,d}V_e,~ P_{e,s}V_e,~P_{o,d}V_o,~P_{o,s}V_o$, respectively.
From Definition \ref{def:orthogonal}, we have
\begin{equation*}
\begin{gathered}
p_{e,d}v_e(t,x)=a_e(t)i Q(x)+b_e(t)Q'(x),\\
p_{e,s}v_e(t,x)=\eta_{e}(t,x),\\
p_{o,d}v_o(t,x)=a_o(t)i \partial_xQ(x)+b_o(t)xQ(x),\\
p_{o,s}v_o(t,x)=\eta_{o}(t,x).
\end{gathered}
\end{equation*}

We would like to summarize the following useful computations
\begin{equation}\label{eq:relation}
\begin{gathered}
\langle Q\mid Q'\rangle=M(Q),\quad
\langle \partial_xQ\mid xQ\rangle =-M(Q).
\end{gathered}
\end{equation}
To obtain the coefficients in \eqref{eq:fg}, the relations \eqref{eq:relation} yield
\begin{equation}\label{eq:p-f}
\begin{gathered}
a_e(t)=-\frac{\langle iv_e(t)\mid Q'\rangle}{M(Q)},\quad
b_e(t)=\frac{\langle v_e\mid Q\rangle}{M(Q)},\\
\langle \eta_e(t)\mid Q\rangle =\langle i\eta_e(t)\mid Q'\rangle =0
\end{gathered}
\end{equation}
and
\begin{equation}\label{eq:p-g}
\begin{gathered}
a_o(t)=\frac{\langle iv_o(t)\mid xQ\rangle}{M(Q)},\quad
b_o(t)=-\frac{\langle v_o\mid \partial_xQ\rangle}{M(Q)},\\
\langle \eta_o(t)\mid \partial_xQ\rangle =\langle i\eta_o(t)\mid xQ\rangle =0.
\end{gathered}
\end{equation}

\section{Parameter choices in evolutionary computation}
 
For estimating the prospects of $a_e(t),~b_e(t),~\eta_e(t),~a_o(t),~b_o(t),~\eta_o(t),~\theta(t),~\alpha(t)$, we will use a standard bootstrap argument, which one sees in the next section.
Specifically, we assume the following estimates hold on some time interval in this section:
$$
|a_e(t)|+|b_e(t)|+\|\eta_e(t)\|_{H^1}+|a_o(t)|+|b_o(t)|+\|\eta_o(t)\|_{H^1}\ll 1
$$
and
$$
|\alpha(t)^2-1|\ll 1,~|1+\dot{\theta}(t)|\ll 1.
$$

\subsection{Computations on  $a_e(t),~b_e(t),~a_o(t),~b_o(t),~\dot{\theta}(t),~\dot{\alpha}(t)$}

Firstly, we consider terms $a_e(t)$ and $b_e(t)$.
Since $v_e$ is the even function of $x$, we easily see that by \eqref{eq:constrain}
\begin{equation}\label{eq:a_e}
a_e(t)=\frac{\langle iv_e(t)\mid Q_{\alpha(t)}'-Q'\rangle}{M(Q)}.
\end{equation}
By differentiation along with the equation \eqref{eq:fg0}, we have
\begin{equation}\label{eq:a_ed}
\begin{split}
&M(Q)\dot{a}_e(t)\\
& =\langle \mathcal{L}_-v_e(t)\mid Q'\rangle-(1+\dot{\theta}(t))\langle v_e(t)\mid Q'\rangle-(\alpha(t)^2+\dot{\theta}(t))\langle Q_{\alpha(t)}\mid Q'\rangle\\
&\quad  -\langle (Q_{\alpha(t)}^2-Q^2)(2v_e(t)+\overline{v}_e(t))\mid Q'\rangle-\langle \mathcal{N}_{\alpha(t),e}(v_e(t),v_o(t)) \mid Q'\rangle\\
& =\langle L_+v_e(t)\mid Q'\rangle-(1+\dot{\theta}(t))\langle v_e(t)\mid Q'\rangle-(\alpha(t)^2+\dot{\theta}(t))\langle Q_{\alpha(t)}\mid Q'\rangle\\
&\quad  -3\langle (Q_{\alpha(t)}^2-Q^2)v_e(t)\mid Q'\rangle-\langle \mathcal{N}_{\alpha(t),e}(v_e(t),v_o(t)) \mid Q'\rangle\\
& = -2\langle v_e(t)\mid Q_{\alpha(t)}-Q\rangle-(1+\dot{\theta}(t))\langle v_e(t)\mid Q'\rangle-(\alpha(t)^2+\dot{\theta}(t))\langle Q_{\alpha(t)}\mid Q'\rangle\\
&\quad  -3\langle (Q_{\alpha(t)}^2-Q^2)v_e(t)\mid Q'\rangle-\langle \mathcal{N}_{\alpha(t),e}(v_e(t),v_o(t)) \mid Q'\rangle\\
& =O\left(|\alpha(t)-1|\|v_e(t)\|_{H^1}+|\alpha(t)^2+\dot{\theta}(t)|+|1+\dot{\theta}(t)|\|v_e(t)\|_{H_1}+\|v_e\|_{H^1}^2+\|v_o\|_{H^1}^2\right),
\end{split}
\end{equation}
where we employ $L_+Q'=-2Q$ and $\langle v_e(t)\mid Q_{\alpha(t)}\rangle =0$.

On the other hand, again by \eqref{eq:constrain}
\begin{equation}\label{eq:b_e}
b_e(t)=-\frac{\langle v_e(t)\mid Q_{\alpha(t)}-Q\rangle}{M(Q)}.
\end{equation}
Repeated computation gives
\begin{equation}\label{eq:b_ed}
\begin{split}
&M(Q)\dot{b}_e(t)\\
& =\langle i\mathcal{L}_-v_e(t)\mid Q\rangle-(1+\dot{\theta}(t))\langle iv_e(t)\mid Q\rangle\\
&\quad  -\dot{\alpha}(t)\langle Q_{\alpha(t)}'\mid Q\rangle -\langle i(Q_{\alpha(t)}^2-Q^2)v_e(t)\mid Q\rangle-\langle i\mathcal{N}_{\alpha(t),e}(v_e(t),v_o(t)) \mid Q\rangle\\
& =\langle iL_-v_e(t)\mid Q\rangle-(1+\dot{\theta}(t))\langle iv_e(t)\mid Q\rangle\\
&\quad  -\dot{\alpha}(t)\langle Q_{\alpha(t)}'\mid Q\rangle -\langle i(Q_{\alpha(t)}^2-Q^2)v_e(t)\mid Q\rangle-\langle i\mathcal{N}_{\alpha(t),e}(v_e(t),v_o(t)) \mid Q\rangle\\
& = -(1+\dot{\theta}(t))\langle iv_e(t)\mid Q\rangle\\
&\quad  -\dot{\alpha}(t)\langle Q_{\alpha(t)}'\mid Q\rangle -\langle i(Q_{\alpha(t)}^2-Q^2)v_e(t)\mid Q\rangle-\langle i\mathcal{N}_{\alpha(t),e}(v_e(t),v_o(t)) \mid Q\rangle\\
& =O\left(|\dot{\alpha}(t)|+|\alpha(t)-1|\|v_e(t)\|_{H^1}+|1+\dot{\theta}(t)|\|v_e(t)\|_{H^1}+\|v_e\|_{H^1}^2+\|v_o\|_{H^1}^2\right),
\end{split}
\end{equation}
where $L_-Q=0$ was used in the above computations.

Secondly, we treat terms $a_o(t)$ and $b_o(t)$.
By \eqref{eq:fg1} and
\begin{equation*}
L_-(xQ)=xL_-Q-2\partial_xQ=-2\partial_xQ,
\end{equation*}
it follows that
\begin{equation}\label{eq:a_o}
\begin{split}
&\dot{a}_o(t)M(Q)\\
& = \langle -\mathcal{L}_+v_o(t) \mid xQ\rangle+(1+\dot{\theta}(t))\langle v_0(t) \mid xQ\rangle\\
& \quad +\langle (Q_{\alpha(t)}^2-Q^2)(2v_o(t)-\overline{v_o(t)}) \mid xQ\rangle+\langle \mathcal{N}_{\alpha(t),o}(v_e(t),v_o(t)) \mid xQ\rangle\\
& = -\langle L_-v_o(t)\mid xQ\rangle +(1+\dot{\theta}(t))\langle v_o(t)\mid xQ\rangle\\
& \quad+\langle (Q_{\alpha(t)}^2-Q^2)v_o(t)\mid xQ\rangle +\langle\mathcal{N}_{\alpha(t),o}(v_e(t),v_o(t)) \mid xQ\rangle\\
& = -2b_o(t)M(Q)+(1+\dot{\theta}(t))\langle v_o(t)\mid xQ\rangle + \langle (Q_{\alpha(t)}^2-Q^2)v_o(t)\mid xQ\rangle\\
& \quad + \langle\mathcal{N}_{\alpha(t),o}(v_e(t),v_o(t)) \mid xQ\rangle\\
& =-2b_o(t)M(Q)+O\left(|\alpha(t)-1|\|v_o(t)\|_{H^1}+|1+\dot{\theta}(t)|\|v_o(t)\|_{H^1}+\|v_e\|_{H^1}^2+\|v_o\|_{H^1}^2\right),
\end{split}
\end{equation}
Again using  \eqref{eq:fg1}, we have
\begin{equation}\label{eq:b_o}
\begin{split}
& \dot{b}_o(t) M(Q)\\
& =-\langle i\mathcal{L}_+v_o(t)\mid \partial_xQ\rangle+(1+\dot{\theta}(t))\langle iv_o(t)\mid\partial_xQ\rangle\\
& \quad +3\langle i(Q_{\alpha(t)}^2-Q^2)v_o(t)\mid \partial_xQ\rangle +\langle i\mathcal{N}_{\alpha(t),o}(v_e(t),v_o(t))\mid \partial_xQ\rangle\\
& = -\langle iL_+v_o(t)\mid \partial_xQ\rangle+(1+\dot{\theta}(t))\langle iv_o(t)\mid\partial_xQ\rangle\\
& \quad +3\langle i(Q_{\alpha(t)}^2-Q^2)v_o(t)\mid \partial_xQ\rangle +\langle i\mathcal{N}_{\alpha(t),o}(v_e(t),v_o(t))\mid \partial_xQ\rangle\\
& = (1+\dot{\theta}(t))\langle iv_o(t)\mid\partial_xQ\rangle+3\langle i(Q_{\alpha(t)}^2-Q^2)v_o(t)\mid \partial_xQ\rangle\\
& \quad +\langle i\mathcal{N}_{\alpha(t),o}(v_e(t),v_o(t))\mid \partial_xQ\rangle\\
& =O\left(|\alpha(t)-1|\|v_o(t)\|_{H^1}+|1+\dot{\theta}(t)|\|v_o(t)\|_{H^1}+\|v_e\|_{H^1}^2+\|v_o\|_{H^1}^2\right),
\end{split}
\end{equation}

Thirdly, we deal with the term $\alpha(t)^2+\dot{\theta}(t)$. 
Taking the product in \eqref{eq:fg0} with $Q_{\alpha(t)}'$, we have
\begin{equation*}
\begin{split}
& \langle i\dot{v_e}(t)\mid Q_{\alpha(t)}'\rangle+\langle \mathcal{L}_-v_e(t)\mid Q_{\alpha(t)}'\rangle\\
& =(1+\dot{\theta}(t))\langle v_e(t) \mid Q_{\alpha(t)}'\rangle+(\alpha(t)^2+\dot{\theta}(t))\langle Q_{\alpha(t)}\mid Q_{\alpha(t)}'\rangle\\
& \quad +3\langle (Q_{\alpha(t)}^2-Q^2)v_e(t)\mid Q_{\alpha(t)}'\rangle+\langle \mathcal{N}_{\alpha(t),e}(v_e(t),v_o(t))\mid Q_{\alpha(t)}'\rangle.
\end{split}
\end{equation*}
On the left-hand side, we compute by \eqref{eq:constrain}
\begin{equation*}
\langle i\dot{v_e}(t)\mid Q_{\alpha(t)}'\rangle  =-\dot{\alpha}(t)\langle iv_{e}(t)\mid Q_{\alpha(t)}''\rangle
=O\left(|\dot{\alpha}(t)|\|v_e(t)\|_{H^1}\right),
\end{equation*}
\begin{equation*}
\begin{split}
\langle \mathcal{L}_-v_e(t)\mid Q_{\alpha(t)}'\rangle & =\langle L_+v_e(t)\mid Q_{\alpha(t)}'-Q'\rangle+\langle L_+v_e(t)\mid Q'\rangle\\
& =\langle v_e(t)\mid L_+(Q_{\alpha(t)}'-Q')\rangle-2\langle v_e(t)\mid Q\rangle \\
& =\langle v_e(t)\mid L_+(Q_{\alpha(t)}'-Q')\rangle -2M(Q)b_e(t)\\
& = -2M(Q)b_e(t)+O\left(|\alpha(t)-1|\|v_e(t)\|_{H^1}\right)
\end{split}
\end{equation*}
and
\begin{equation*}
\begin{split}
\langle Q_{\alpha(t)} \mid Q_{\alpha(t)}'\rangle = M(Q)+O\left(|\alpha(t)-1|\right).
\end{split}
\end{equation*}
Then we obtain
\begin{equation}\label{eq:theta}
\begin{split}
& \left(\alpha(t)^2+\dot{\theta}(t)\right)\left(M(Q)+O\left(|\alpha(t)-1|+\|v_e(t)\|_{H^1}\right)\right)\\
& = 2M(Q)b_e(t)+O\left(|\dot{\alpha}(t)|\|v_e(t)\|_{H^1}+|\alpha(t)-1|\|v_e(t)\|_{H^1}+\|v_e(t)\|_{H^1}^2+\|v_o(t)\|_{H^1}^2\right).
\end{split}
\end{equation}

So finally, let us consider the term $\dot{\alpha}(t)$.
By \eqref{eq:constrain}
\begin{equation*}
\langle \dot{v}_e(t)\mid Q_{\alpha(t)}\rangle=-\dot{\alpha}(t)\langle v_e(t)\mid  Q_{\alpha(t)}'\rangle.
\end{equation*}
Then \eqref{eq:fg0} and $L_-Q=0$ imply
\begin{equation*}
\begin{split}
&-\dot{\alpha}(t)\langle v_e(t)\mid  Q_{\alpha(t)}'\rangle\\
& =  \langle iL_-v_e(t)\mid Q_{\alpha(t)}\rangle-(1+\dot{\theta}(t))\langle iv_e(t)\mid Q_{\alpha(t)}\rangle -\dot{\alpha}(t)\langle Q_{\alpha(t)}'\mid Q_{\alpha(t)}\rangle\\
& \quad -\langle i(Q_{\alpha(t)}^2-Q^2)v_e\mid Q_{\alpha(t)}\rangle-\langle iN_{\alpha(t),e}(v_e(t),v_o(t))\mid Q_{\alpha(t)}\rangle\\
& =  \langle iv_e(t)\mid L_-(Q_{\alpha(t)}-Q)\rangle-(1+\dot{\theta}(t))\langle iv_e(t)\mid Q_{\alpha(t)}\rangle \\
& \quad -\dot{\alpha}(t)\left(M(Q)+\langle Q_{\alpha(t)}'-Q'\mid Q_{\alpha(t)}\rangle+\langle Q'\mid Q_{\alpha(t)}-Q\rangle\right)\\
&\quad +\langle i(Q_{\alpha(t)}^2-Q^2)v_e\mid Q_{\alpha(t)}\rangle+ \langle iN_{\alpha(t),e}(v_e(t),v_o(t))\mid Q_{\alpha(t)}\rangle\\
& =-\dot{\alpha}(t)\left(M(Q)+O\left(|\alpha(t)-1|\right)\right)\\
&\quad +O\left(|\alpha(t)-1|\|v_e(t)\|_{H^1}+|1+\dot{\theta}(t)|\|v_e(t)\|_{H^1}+\|v_e(t)\|_{H^1}^2+\|v_o(t)\|_{H^1}^2\right),
\end{split}
\end{equation*}
which leads to us
\begin{equation}\label{eq:alpha}
\begin{split}
& \dot{\alpha}(t)\left(M(Q)+O\left(|\alpha(t)-1|+\|v_e(t)\|_{H^1}\right)\right)\\
& =O\left(|\alpha(t)-1|\|v_e(t)\|_{H^1}+|1+\dot{\theta}(t)|\|v_e(t)\|_{H^1}+\|v_e(t)\|_{H^1}^2+\|v_o(t)\|_{H^1}^2\right).
\end{split}
\end{equation}

\subsection{Computations on $\eta_e(t),~\eta_o(t)$}

In this subsection, we analyze the estimates on terms $\eta_e(t)$ and $\eta_o(t)$ by the energy method.

Note that 
\begin{equation*}
\frac{\partial}{\partial t}P_{e,d}V_e(t)=
 \begin{pmatrix}
\dot{a}_e(t)iQ+\dot{b}_e(t)Q'\\
-\dot{a}_e(t)iQ+\dot{b}_e(t)Q'
 \end{pmatrix}.
 \end{equation*}
On the other hand, by $L_-Q=0$ and $L_+Q'=-2Q$, we obtain
\begin{equation*}
\begin{split}
\langle i\mathcal{H}_eP_{e,d}V_e(t,x)\mid P_{e,s}V_e(t,x)\rangle & =\langle -2ib_e(t)Q(x)(1,-1)^t,(\eta_e(t),\overline{\eta}_e(t))^t\rangle\\
& =2b_e(t)\Im((Q,-Q)^t,(\eta_e(t),\overline{\eta}_e(t))^t)\\
& = -4b_e(t)\Im( \eta_e(t),Q)
\end{split}
\end{equation*}
and
\begin{equation*}
\begin{split}
\langle (\partial_t-i\mathcal{H}_e)P_{e,s}V_e(t)\mid P_{e,s}V_e(t)\rangle &  =\frac12\frac{d}{dt}\|P_{e,s}V_e(t)\|_{L^2}^2-2\langle
i\eta_e(t)^2\mid Q^2\rangle\\
& = \frac{d}{dt}\|\eta_e(t)\|_{L^2}^2-2\langle i\eta_e(t)^2\mid Q^2\rangle.
\end{split}
\end{equation*}
Taking the product \eqref{eq:Mfg} with $P_{e,s}V_e(t)$, we have
\begin{equation}\label{eq:eta_e}
\begin{split}
&\frac{d}{dt}\|\eta_e(t)\|_{L^2}^2\\
& \le 4\|\eta_e(t)\|_{L^2}^2+c(|\dot{a}_e(t)|+|\dot{b}_e(t)|+|b_e(t)|)\|\eta_e(t)\|_{L^2}+c|1+\dot{\theta}(t)|\|v_e(t)\|_{L^2}\|\eta_e(t)\|_{L^2}\\
& \qquad +|\alpha(t)^2+\dot{\theta}(t)|\|Q_{\alpha(t)}\|_{L^2}\|\eta_e(t)\|_{L^2}+c|\dot{\alpha}(t)|\|Q_{\alpha(t)}'\|_{L^2}\|\eta_e(t)\|_{L^2}\\
& \qquad + c\|Q_{\alpha(t)}^2-Q^2\|_{L^{\infty}}\|v_e(t)\|_{L^2}\|\eta_e(t)\|_{L^2}+O(\|v_e(t)\|_{L^2}^2+\|v_o(t)\|_{L^2}^2)\|\eta_e(t)\|_{L^2}\\
& \le  4\|\eta_e(t)\|_{L^2}^2+c\|\eta_e(t)\|_{L^2}\left(|\dot{a}_e(t)|+|\dot{b}_e(t)|+|b_e(t)|+|1+\dot{\theta}(t)|\|v_e(t)\|_{L^2}\right.\\
& \qquad \left.+|\alpha(t)^2+\dot{\theta}(t)|+|\dot{\alpha}(t)|+O(\|v_e(t)\|_{L^2}^2+\|v_o(t)\|_{L^2}^2)\right).
\end{split}
\end{equation}

Similarly, by $L_+\partial_xQ=0$ and $L_-(xQ)=-2\partial_xQ$, we obtain
\begin{equation*}
\frac{\partial}{\partial t}P_{o,d}V_o(t)=
 \begin{pmatrix}
 \dot{a}_o(t)\partial_xQ+\dot{b}_o(t)xQ\\
 -\dot{a}_o(t)i\partial_xQ+\dot{b}_o(t)xQ
  \end{pmatrix},
\end{equation*}
\begin{equation*}
\begin{split}
\langle i\mathcal{H}_oP_{o,d}V_o(t,x)\mid P_{o,s}V_o(t,x)\rangle & =\langle -2ib_o(t)\partial_xQ(x)(1,-1)^t,(\eta_o(t),\overline{\eta}_o(t))^t\rangle\\
& =2b_o(t)\Im((\partial_xQ,-\partial_xQ)^t,(\eta_o(t),\overline{\eta}_o(t))^t)\\
& = -4b_o(t)\Im( \eta_o(t),\partial_xQ)
\end{split}
\end{equation*}
and
\begin{equation*}
\begin{split}
\langle (\partial_t-i\mathcal{H}_o)P_{o,s}V_o(t)\mid P_{o,s}V_o(t)\rangle &  =\frac12\frac{d}{dt}\|P_{o,s}V_o(t)\|_{L^2}^2-2\langle
i\eta_o(t)^2\mid Q^2\rangle\\
& = \frac{d}{dt}\|\eta_o(t)\|_{L^2}^2-2\langle i\eta_o(t)^2\mid Q^2\rangle.
\end{split}
\end{equation*}
Thus by \eqref{eq:Mfg1} we obtain
\begin{equation}\label{eq:eta_o}
\begin{split}
&\frac{d}{dt}\|\eta_o(t)\|_{L^2}^2\\
&\le  4\|\eta_o(t)\|_{L^2}^2+c\|\eta_o(t)\|_{L^2}\left(|\dot{a}_o(t)|+|\dot{b}_o(t)|+|b_o(t)|+|1+\dot{\theta}(t)|\|\eta_o(t)\|_{L^2}\right.\\
& \qquad \left.+ \|Q_{\alpha(t)}^2-Q^2\|_{L^{\infty}}\|v_o\|_{L^2}+O(\|v_e(t)\|_{L^2}^2+\|v_o(t)\|_{L^2}^2)\right)\\
& \le 4\|\eta_o(t)\|_{L^2}^2+c\|\eta_o(t)\|_{L^2}\left(|\dot{a}_o(t)|+|\dot{b}_o(t)|+|b_o(t)|+|1+\dot{\theta}(t)|\|\eta_o(t)\|_{L^2}\right.\\
& \left. \qquad +|\alpha(t)-1|\|v_o\|_{L^2}+O(\|v_e(t)\|_{L^2}^2+\|v_o(t)\|_{L^2}^2)\right).
\end{split}
\end{equation}

\section{Proof of Theorem \ref{thm:main}}\label{sec:proof}

In this section, we give the a priori estimate of solution $v(t)$ to \eqref{eq:rnnls} that accounts for the evolution of $a_e(t),~b_e(t),~\eta_e(t),~a_o(t),~b_o(t),~\eta_o(t)$ in \eqref{eq:p-g}.
The following proposition holds.

\begin{proposition}\label{prop:apriori}
Let $\varepsilon>0$ be a small number, and $(\theta(t),\alpha(t))$ be obtained in Lemma $\ref{lem:c^1map}$ such that \eqref{eq:constrain}.
Let $a_e(t),~b_e(t),~\eta_e(t),~a_o(t),~b_o(t),~\eta_o(t)$ be defined in \eqref{eq:fg}.
Suppose that
\begin{equation}\label{eq:data}
\begin{gathered}
|a_e(0)|+|b_e(0)|+|a_o(0)|\lesssim \varepsilon,\\
|b_o(0)|+\|\eta_e(0)\|_{H^1}+\|\eta_o(0)\|_{H^1}\lesssim \varepsilon^{2}.
\end{gathered}
\end{equation}
Then there exists $T\ll \log(1/\varepsilon)$ for which initial value problems \eqref{eq:Mfg} admit unique solutions $v_e(t),~v_o(t)\in C([-T,T]:H^1)$ of the form \eqref{eq:fg} with initial data
\begin{equation*}
\begin{gathered}
v_e(0,x)=a_e(0)i Q(x)+b_e(0)Q'(x)+\eta_e(0,x),\\
v_o(0,x)=a_o(0)i \partial_xQ(x)+b_o(0)xQ(x)+\eta_o(0,x),
\end{gathered}
\end{equation*}
respectively.
Moreover the following bounds hold for above solutions
\begin{equation*}
\begin{split}
\sup_{|t|\le T}& \left(|a_e(t)|+|b_e(t)|+|a_o(t)|+|b_o(t)|\right.\\
& \left.+\|\eta_e(t)\|_{H^1}+\|\eta_o(t)\|_{H^1}+|\alpha(t)-1|\right)\lesssim \varepsilon
\end{split}
\end{equation*}
and
\begin{equation*}
\sup_{|t|\le T}\left(|\alpha(t)^2+\dot{\theta}(t)|+|\dot{\alpha}(t)|\right)\lesssim \varepsilon^2.
\end{equation*}
\end{proposition}

\begin{remark}\label{rem:dottheta}
It is easily to understand that the assumptions
$$
\sup_{|t|\ll \log(1/\varepsilon)}|\alpha(t)-1|\lesssim \varepsilon,~\sup_{|t|\ll \log(1/\varepsilon)}|\alpha(t)^2+\dot{\theta}(t)|\lesssim \varepsilon^2,~\sup_{|t|\ll \log(1/\varepsilon)}|\dot{\alpha}(t)|\lesssim \varepsilon^2
$$
imply that
\begin{equation*}
\begin{split}
1+\dot{\theta}(t) & = \alpha(t)^2+\dot{\theta}(t)+(1-\alpha(0)^2)-2\int_0^t\alpha(t')\dot{\alpha}(t')\,dt'\\
& = O(\varepsilon^2)+O(\varepsilon)+O(\varepsilon^2 t)=O(\varepsilon)
\end{split}
\end{equation*}
for $|t|\ll \log(1/\varepsilon)$.
\end{remark}

\begin{remark}
The size assumption on the data in \eqref{eq:data} leads $\|v_e(0)\|_{H^1}+\|v_o(0)\|_{H^1}\lesssim \varepsilon$.
From \eqref{eq:a_e} and \eqref{eq:b_e} along with Lemma $\ref{lem:c^1map}$, it follows
\begin{equation}\label{eq:e^2}
|a_e(0)|+|b_e(0)|\lesssim \varepsilon^2.
\end{equation}
In this sense, the condition \eqref{eq:data} contains an auxiliary hypothesis.
\end{remark}

\begin{proof}
The proof follows from the boot strap argument in conjunction with the a priori estimates in \eqref{eq:a_e}, \eqref{eq:a_ed},  \eqref{eq:b_e}, \eqref{eq:b_ed}, \eqref{eq:a_o}, \eqref{eq:b_o}, \eqref{eq:theta}, \eqref{eq:alpha}, \eqref{eq:eta_e} and \eqref{eq:eta_o}.

Let us introduce the following notation
\begin{equation*}
\Lambda(t)= |a_e(t)|+|b_e(t)|+|a_o(t)|+|b_o(t)|+\|\eta_e(t)\|_{H^1}+\|\eta_o(t)\|_{H^1}+|\alpha(t)-1|
\end{equation*}
and
\begin{equation*}
\Xi(t)=|\alpha(t)^2+\dot{\theta}(t)|+|\dot{\alpha}(t)|.
\end{equation*}
We note that by \eqref{eq:theta}, \eqref{eq:alpha}, \eqref{eq:e^2} and $|\alpha(0)-1|\lesssim \varepsilon$, we have
\begin{equation*}
\Lambda(0)\le c\varepsilon,\quad 
\Xi(0)\le c\varepsilon^2.
\end{equation*}
Suppose the bootstrap assumption as follows
\begin{equation}\label{eq:bootstrap}
\sup_{|t|\le \delta}\Lambda(t)\le 2c\varepsilon,\quad 
\sup_{|t|\le \delta}\Xi(t)\le 2c\varepsilon^2
\end{equation}
for some $\delta>0$.

From \eqref{eq:a_e} and \eqref{eq:b_e}, we obtain
\begin{equation}\label{eq:a1}
|a_e(t)|+|b_e(t)|\lesssim \|v_e(t)\|_{L^2}|\alpha(t)-1|\lesssim \varepsilon^2.
\end{equation}
Thus by \eqref{eq:theta}, \eqref{eq:alpha} and \eqref{eq:a1}, it follows that
\begin{equation}\label{eq:a2}
\begin{split}
&|\alpha(t)^2+\dot{\theta}(t)|\\
& \lesssim  |b_e(t)|+\left(|\dot{\alpha}(t)|+ |\alpha(t)-1|\right)\|v_e(t)\|_{L^2}+\|v_e(t)\|_{H^1}^2+\|v_o(t)\|_{H^1}^2\\
& \lesssim    \varepsilon^2,
\end{split}
\end{equation}
and
\begin{equation}\label{eq:a3}
|\dot{\alpha}(t)|  \lesssim |\alpha(t)-1|\|v_e(t)\|_{L^2}+|1+\dot{\theta}(t)|\|v_e(t)\|_{H^1}+\|v_e(t)\|_{H^1}^2+\|v_o(t)\|_{H^1}^2\lesssim   \varepsilon^2,
\end{equation}
which yields
\begin{equation}\label{eq:a31}
|\alpha(t)-1|\le |\alpha(0)-1|+\left|\int_0^t\dot{\alpha}(t')\,dt'\right|\lesssim \varepsilon+\varepsilon^2|t|.
\end{equation} 

In a similar way, \eqref{eq:a_ed}, \eqref{eq:b_ed} and \eqref{eq:a2} yield
\begin{equation*}
\begin{split}
&|\dot{a}_e(t)| \\
& \lesssim  |\alpha(t)^2+\dot{\theta}(t)|+\left(|\alpha(t)-1|+|1+\dot{\theta}(t)|\right)\|v_e(t)\|_{L^2}+\|v_e(t)\|_{H^1}^2+\|v_o(t)\|_{H^1}^2\\
& \lesssim  \varepsilon^2
\end{split}
\end{equation*}
and
\begin{equation*}
\begin{split}
|\dot{b}_e(t)|
& \lesssim |\dot{\alpha}(t)|+\left(|1+\dot{\theta}(t)|+|\alpha(t)-1|\right)|\|v_e(t)\|_{L^2}+\|v_e(t)\|_{H^1}^2+\|v_o(t)\|_{H^1}^2\\
&\lesssim   \varepsilon^2.
\end{split}
\end{equation*}

Also from \eqref{eq:a_o} and \eqref{eq:b_o}, it follows that
\begin{equation*}
\begin{split}
& |\dot{a}_o(t)|\\
& \lesssim  |b_o(t)|+\left(|1+\dot{\theta}(t)|+|\alpha(t)-1|\right)\|v_o(t)\|_{L^2}+\|v_e(t)\|_{H^1}^2+\|v_o(t)\|_{H^1}^2\\
& \lesssim   |b_o(t)|+\varepsilon^2
\end{split}
\end{equation*}
and
\begin{equation*}
\begin{split}
 |\dot{b}_o(t)|
& \lesssim  \left(|1+\dot{\theta}(t)|+|\alpha(t)-1|\right)\|v_o(t)\|_{L^2}+\|v_e(t)\|_{H^1}^2+\|v_o(t)\|_{H^1}^2\\
& \lesssim   \varepsilon^2,
\end{split}
\end{equation*}
respectively.
If we take the integral of the above functions, we obtain
\begin{equation}\label{eq:a4}
|b_o(t)|\lesssim  |b_o(0)|+|t|\varepsilon^2
\end{equation}
and thus
\begin{equation}\label{eq:a5}
|a_o(t)|\lesssim |a_o(0)|+\left(|b_o(0)|+\varepsilon^2\right)|t|+\varepsilon^2t^2.
\end{equation}

Next, it is clear from \eqref{eq:eta_e} and \eqref{eq:eta_o} along with the similar argument to above that
\begin{equation*}
\begin{split}
\frac{d}{dt}\|\eta_e(t)\|_{L^2} & \lesssim \|\eta_e(t)\|_{L^2}+|\dot{a}_e(t)|+|\dot{b}_e(t)|+|b_e(t)|+\varepsilon^2\\
& \lesssim  \|\eta_e(t)\|_{L^2}+\varepsilon^2
\end{split}
\end{equation*}
and
\begin{equation*}
\begin{split}
\frac{d}{dt}\|\eta_o(t)\|_{L^2} & \lesssim \|\eta_o(t)\|_{L^2}+|\dot{a}_o(t)|+|\dot{b}_e(t)|+|b_o(t)|+\varepsilon^2\\
& \lesssim  \|\eta_o(t)\|_{L^2}+|b_o(0)|+\varepsilon^2+\varepsilon^2|t|.
\end{split}
\end{equation*}
Hence by Gronwall's inequality, we see that
\begin{equation}\label{eq:a5}
\|\eta_e(t)\|_{L^2}\lesssim \|\eta_e(0)\|_{L^2}e^{c|t|}+\varepsilon^2(e^{c|t|}-1)\le \left(\|\eta_e(0)\|_{L^2}+\varepsilon^2\right)e^{c|t|}
\end{equation}
and
\begin{equation}\label{eq:a51}
\begin{split}
\|\eta_o(t)\|_{L^2}& \lesssim \|\eta_o(0)\|_{L^2}e^{c|t|}+\left(|b_o(0)|+\varepsilon^2\right)(e^{c|t|}-1)+\varepsilon^2(e^{c|t|}-1-c|t|)\\
& \le \left( \|\eta_o(0)\|_{L^2}+|b_o(0)|+\varepsilon^2\right)e^{c|t|}.
\end{split}
\end{equation}

Similar to above, we obtain
\begin{equation*}
\begin{split}
\frac{d}{dt}\|\partial_x\eta_e(t)\|_{L^2} & \lesssim \|\eta_e(t)\|_{H^1}+|\dot{a}_e(t)|+|\dot{b}_e(t)|+|b_e(t)|+\varepsilon^2\\
& \lesssim  \|\partial_x\eta_e(t)\|_{L^2}+(\|\eta_e(0)\|_{L^2}+\varepsilon^2)e^{c|t|}
\end{split}
\end{equation*}
and
\begin{equation*}
\begin{split}
\frac{d}{dt}\|\partial_x\eta_o(t)\|_{L^2} & \lesssim \|\eta_o(t)\|_{H^1}+|\dot{a}_e(t)|+|\dot{b}_e(t)|+|b_o(t)|+\varepsilon^2\\
& \lesssim  \|\partial_x\eta_o(t)\|_{L^2}+ (\|\eta_o(0)\|_{L^2}+|b_o(0)|+\varepsilon^2)e^{c|t|},
\end{split}
\end{equation*}
which imply
\begin{equation}\label{eq:a6}
\begin{split}
\|\partial_x\eta_e(t)\|_{L^2} &\lesssim \|\partial_x\eta_e(0)\|_{L^2}e^{c|t|}+(\|\eta_e(0)\|_{L^2}+\varepsilon^2)e^{c|t|}|t|
\end{split}
\end{equation}
and
\begin{equation}\label{eq:a7}
\begin{split}
\|\partial_x\eta_o(t)\|_{L^2}& \lesssim \|\partial_x\eta_o(0)\|_{L^2}e^{c|t|}+(\|\eta_o(0)\|_{L^2}+|b_o(0)|+\varepsilon^2)e^{c|t|}|t|,
\end{split}
\end{equation}
respectively.

Now, we may take the data $a_e(0),~b_e(0),~a_o(0),~b_o(0),~\eta_e(0),~\eta_o(0)$ such as \eqref{eq:data}.
Fix $T_{\varepsilon}\ll \log(1/\varepsilon)$.
A bootstrap argument in conjunction with \eqref{eq:a1}, \eqref{eq:a2}, \eqref{eq:a3}, \eqref{eq:a4}, \eqref{eq:a5}, \eqref{eq:a6} and \eqref{eq:a7} yields that the estimate \eqref{eq:bootstrap} holds as long as $\delta\le T_{\varepsilon}$, then
\begin{equation*}
\sup_{|t|\le T_{\varepsilon}}\Lambda(t)\le 2c\varepsilon,\quad
\sup_{|t|\le T_{\varepsilon}}\Xi(t)\le 2c\varepsilon^2,
\end{equation*}
which is the desired result.
\end{proof}

The following lemma is used to justify the sufficient condition on the size of data in \eqref{eq:data}.

\begin{lemma}\label{lem:compare}
Let $0<\varepsilon\ll 1,~\theta\in\mathbb{R},~\alpha\in\mathbb{R}$ with $|\theta|\lesssim \varepsilon,~|\alpha-1|\lesssim \varepsilon$.
Let $a,~b,~a',~b'$ be real numbers, and let $\eta,~\eta'$ be even, odd functions in $H^1$, respectively with
\begin{equation*}
\begin{gathered}
\langle \eta\mid Q\rangle=\langle i\eta\mid Q'\rangle=\langle \eta'\mid \partial_xQ\rangle=\langle i\eta'\mid xQ\rangle=0.
\end{gathered}
\end{equation*}
If 
\begin{equation*}
aiQ+bQ'+\eta+a'i\partial_xQ+b'xQ+\eta'=Q-e^{i\theta}Q_{\alpha}+O_{H^1}(\varepsilon^2),
\end{equation*}
then
\begin{equation*}
\begin{gathered}
|a|+|b|\lesssim \varepsilon,\\
|a'|+|b'|+\|\eta\|_{H^1}+\|\eta'\|_{H^1}\lesssim \varepsilon^2.
\end{gathered}
\end{equation*}
Here $O_{H^1}(\epsilon)$ denotes a $H^1$-function such that $\|O_{H^1}(\epsilon)\|_{H^1}\lesssim \epsilon$. 
\end{lemma}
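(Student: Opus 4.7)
The plan is to Taylor-expand $Q-e^{i\theta}Q_\alpha$ around $(\theta,\alpha-1)=(0,0)$ to first order, split the hypothesized identity into even and odd parts in $x$, and then read off the coefficients $a,b,a',b'$ by pairing against the symplectic basis, using the orthogonality of $\eta,\eta'$.

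First I would use $e^{i\theta}=1+i\theta+O(\theta^{2})$ together with the $C^{1}$-dependence $Q_{\alpha}=Q+(\alpha-1)Q'+O_{H^{1}}((\alpha-1)^{2})$ to obtain
\begin{equation*}
Q-e^{i\theta}Q_{\alpha}=-i\theta Q-(\alpha-1)Q'+O_{H^{1}}(\varepsilon^{2}).
\end{equation*}
Since $Q$ and $Q'=(1+x\partial_{x})Q$ are even while $\partial_{x}Q$ and $xQ$ are odd, the hypothesized identity decouples into the two systems
\begin{equation*}
aiQ+bQ'+\eta=-i\theta Q-(\alpha-1)Q'+O_{H^{1}}(\varepsilon^{2}),
\end{equation*}
\begin{equation*}
a'i\partial_{x}Q+b'xQ+\eta'=O_{H^{1}}(\varepsilon^{2}).
\end{equation*}

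Next, I pair the even equation against $Q$ and $iQ'$, using the orthogonality $\langle\eta\mid Q\rangle=\langle i\eta\mid Q'\rangle=0$, the vanishings $\langle iQ\mid Q\rangle=\langle Q'\mid iQ'\rangle=0$, and the non-degenerate relations $\langle Q'\mid Q\rangle=\langle iQ\mid iQ'\rangle=M(Q)$ (the latter following from \eqref{eq:relation}). This gives
\begin{equation*}
bM(Q)=-(\alpha-1)M(Q)+O(\varepsilon^{2}),\qquad aM(Q)=-\theta M(Q)+O(\varepsilon^{2}),
\end{equation*}
so the hypothesis $|\theta|+|\alpha-1|\lesssim\varepsilon$ yields $|a|+|b|\lesssim\varepsilon$. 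Substituting back into the even equation,
\begin{equation*}
\eta=-(a+\theta)iQ-(b+(\alpha-1))Q'+O_{H^{1}}(\varepsilon^{2})=O_{H^{1}}(\varepsilon^{2}),
\end{equation*}
since $a+\theta$ and $b+(\alpha-1)$ are $O(\varepsilon^{2})$. Hence $\|\eta\|_{H^{1}}\lesssim\varepsilon^{2}$.

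For the odd equation, pair against $\partial_{x}Q$ and $ixQ$. The orthogonality $\langle\eta'\mid\partial_{x}Q\rangle=\langle i\eta'\mid xQ\rangle=0$, together with $\langle i\partial_{x}Q\mid\partial_{x}Q\rangle=\langle xQ\mid ixQ\rangle=0$ and the non-degenerate pairings $\langle xQ\mid\partial_{x}Q\rangle=-M(Q)$, $\langle i\partial_{x}Q\mid ixQ\rangle=-M(Q)$, reduces the system to $-b'M(Q)=O(\varepsilon^{2})$ and $-a'M(Q)=O(\varepsilon^{2})$, so $|a'|+|b'|\lesssim\varepsilon^{2}$. Subtracting then gives $\|\eta'\|_{H^{1}}\lesssim\varepsilon^{2}$.

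The only real bookkeeping is checking that the Taylor remainder $Q_{\alpha}-Q-(\alpha-1)Q'$ is indeed $O_{H^{1}}((\alpha-1)^{2})$ (which follows from the $C^{2}$-regularity of $\alpha\mapsto Q_{\alpha}$ as an $H^{1}$-valued map) and that the four pairings above form two non-degenerate $2\times 2$ systems. There is no genuine analytic difficulty here: the four matching conditions are precisely the non-degeneracy of the linearized modulation map at $Q$, which is built into the choice of the symplectic basis \eqref{eq:rootH}--\eqref{eq:rootHt}.
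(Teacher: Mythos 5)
Your argument is correct and follows essentially the same route as the paper's proof: both split the identity by parity, pair the even part against $Q$ and $iQ'$ and the odd part against $\partial_xQ$ and $ixQ$ using the orthogonality of $\eta,\eta'$, and extract the bounds from the first-order cancellation $a+\theta=O(\varepsilon^2)$, $b+(\alpha-1)=O(\varepsilon^2)$. The only cosmetic difference is that you estimate $\eta$ via the explicit Taylor expansion $Q_\alpha=Q+(\alpha-1)Q'+O_{H^1}((\alpha-1)^2)$, whereas the paper reaches the same cancellation through an antisymmetrized double-integral representation of $\langle Q'\mid Q\rangle(Q-Q_\alpha)-\langle Q-Q_\alpha\mid Q\rangle Q'$.
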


\begin{proof}
Since $a'i\partial_xQ+b'xQ+\eta'=O_{H^1}(\varepsilon^2)$, we easily see that
\begin{equation*}
|a'|+|b'|+\|\eta'\|_{H^1}\lesssim \varepsilon^2.
\end{equation*}

On the other hand, the hypothesis guarantees that
\begin{equation*}
\begin{split}
aiQ+bQ'+\eta & = Q-e^{i\theta}Q_{\alpha}+O_{H^1}(\varepsilon^2)\\
& = Q-Q_{\alpha}-i\sin\theta Q_{\alpha}+O_{H^1}(\varepsilon^2).
\end{split}
\end{equation*}
Taking the semi-inner product with $iQ'$, we have
\begin{equation*}
-M(Q)a= \sin\theta\langle Q_{\alpha}\mid Q'\rangle+O(\varepsilon^2)=O(\varepsilon).
\end{equation*}
Similarly, by the semi-inner product with $Q$, it follows
\begin{equation*}
M(Q)b=\langle Q\mid Q\rangle -\langle Q_{\alpha}\mid Q\rangle+O(\varepsilon^2)=\langle Q-Q_{\alpha}\mid Q\rangle +O(\varepsilon^2)=O(\varepsilon).
\end{equation*}
For $\eta$, we have that
\begin{equation*}
\begin{split}
\eta= & Q-Q_{\alpha}-\frac{1}{M(Q)}\langle Q-Q_{\alpha}\mid Q\rangle Q'+O_{H^1}(\varepsilon^2)\\
= & \frac{1}{M(Q)}\left(\langle Q'\mid Q\rangle (Q-Q_{\alpha})-\langle Q-Q_{\alpha}\mid Q\rangle Q'\right)+O_{H^1}(\varepsilon^2)\\
 = & \frac{1}{M(Q)}\int_1^{\alpha}d\beta\int_1^{\beta}\frac{\partial}{\partial\gamma}\left(\langle Q'_{\beta-\gamma+1}\mid Q\rangle Q'_{\gamma}\right)\,d\gamma+O_{H^1}(\varepsilon^2)\\
 = & O_{H^1}(\varepsilon^2),
\end{split}
\end{equation*}
which completes the proof.
\end{proof}

Now we are in position to prove Theorem \ref{thm:main}.

\begin{proof}[Proof of Theorem $\ref{thm:main}$]
Consider the initial data $u_0\in\mathcal{S}_{\varepsilon}$ such that 
$$
\left\|u_0-\left(Q+A_e iQ+B_eQ'+A_oi\partial_xQ\right)\right\|_{H^1}\le \varepsilon^2,
$$
where $A_e,~B_e,~A_o\in\mathbb{R}$ such that $|A_e|+|B_e|\le |A_o|=\varepsilon$.
By the symplectic decomposition, we have the form
$$
u_0=Q+A'_e iQ+B'_eQ'+A'_oi\partial_xQ+B'_oxQ+g,
$$
where $A'_e,~B'_e,~A'_o,~B'_o\in\mathbb{R}$ and $g\in H^1$ such that
$$
\Re\int_{\mathbb{R}}g(x)Q(x)\,dx=0,\quad \Im\int_{\mathbb{R}}g(x)Q'(x)\,dx=0,
$$
$$
\Re\int_{\mathbb{R}}g(x)\partial_xQ(x)\,dx=0,\quad \Im\int_{\mathbb{R}}g(x)xQ(x)\,dx=0.
$$
By Lemma \ref{lem:compare}, it follows that
$$
|A'_e|+|B'_e|\lesssim\varepsilon,\quad |A'_o|+|B'_o|+\|g\|_{H^1}\lesssim\varepsilon^2.
$$
We find data $v_0(x)=v(0,x)$ required in Theorem \ref{thm:main} satisfying 
$$
u_0=Q+A'_e iQ+B'_eQ'+A'_oi\partial_xQ+B'_oxQ+g=e^{i\theta(0)}\left(Q_{\alpha(0)}+v_0\right),
$$
where $\theta(0)$ and $\alpha(0)$ are obtained in Lemma \ref{lem:c^1map}.

By Lemma \ref{lem:c^1map}, we have $|\alpha(0)-1|\lesssim\varepsilon$ and $\|v_0\|_{H^1}\lesssim \varepsilon$.
Moreover, $Q-e^{i\theta(0)}Q_{\alpha(0)}=O_{H^1}(\varepsilon)$ implies that there exists some $n\in\mathbb{Z}$ such that $|\theta(0)-2n\pi|\lesssim \varepsilon$.

We revisit the symplectic decomposition to have
$$
e^{i\theta(0)}v_0=a'_e iQ+b'_eQ'+a'_oi\partial_xQ+b'_oxQ+g_1,
$$
where $a'_e,~b'_e,~a'_o,~b'_o\in\mathbb{R}$ and $g_1\in H^1$ such that
$$
\Re\int_{\mathbb{R}}g_1(x)Q(x)\,dx=0,\quad \Im\int_{\mathbb{R}}g_1(x)Q'(x)\,dx=0,
$$
$$
\Re\int_{\mathbb{R}}g_1(x)\partial_xQ(x)\,dx=0,\quad \Im\int_{\mathbb{R}}g_1(x)xQ(x)\,dx=0.
$$
Again, by Lemma \ref{lem:compare}, it follows that
$$
|A'_e-a'_e|+|B'_e-b'_e|\lesssim\varepsilon,\quad |A'_o-a'_o|+|B'_o-b'_o|+\|g-g_1\|_{H^1}\lesssim\varepsilon^2,
$$
and then
\begin{equation}\label{eq:lemma6}
|a'_e|+|b'_e|\lesssim\varepsilon,\quad|a'_o|+ |b'_o|+\|g_1\|_{H^1}\lesssim\varepsilon^2.
\end{equation}
We use $e^{-i\theta(0)}=1+O(\varepsilon)$ to have
\begin{equation*}
\begin{split}
v_0 & =e^{-i\theta(0)}\left(a'_e iQ+b'_eQ'+a'_oi\partial_xQ+b'_oxQ+g_1\right)\\
& = a'_e iQ+b'_eQ'+a'_oi\partial_xQ+b'_oxQ+O_{H^1}(\varepsilon^2).
\end{split}
\end{equation*}

One applies the symplectic decomposition to $v_0$ as follows
$$
v_0=a_e iQ+b_eQ'+a_oi\partial_xQ+b_oxQ+g_2,
$$
where $a_e,~b_e,~a_o,~b_o\in\mathbb{R}$ and $g_2\in H^1$ such that
$$
\Re\int_{\mathbb{R}}g_2(x)Q(x)\,dx=0,\quad \Im\int_{\mathbb{R}}g_2(x)Q'(x)\,dx=0,
$$
$$
\Re\int_{\mathbb{R}}g_2(x)\partial_xQ(x)\,dx=0,\quad \Im\int_{\mathbb{R}}g_2(x)xQ(x)\,dx=0.
$$
By Lemma \ref{lem:compare} and \eqref{eq:lemma6}, we have
$$
|a_e|+|b_e|\lesssim\varepsilon,\quad |a_o|+ |b_o|+\|g_2\|_{H^1}\lesssim\varepsilon^2,
$$
which corresponds to the hypotheses \eqref{eq:data} required in Proposition \ref{prop:apriori}. 
Hence Proposition \ref{prop:apriori} guarantees the existence of solution $u(t)$ to \eqref{eq:nnls} on the time interval $[-T_{\varepsilon},T_{\varepsilon}]$ provided $T_{\varepsilon}\ll \log (1/\varepsilon)$.

Hence we complete the proof of Theorem \ref{thm:main}.
\end{proof}

\begin{remark}
The equation \eqref{eq:nnls} possesses soliton solutions
$$
u_{\alpha,\beta,\gamma}(t,x)=\frac{\sqrt{2}(\alpha+\beta)}{e^{i\alpha^2 t+\alpha(x-i\gamma)}+e^{i\beta^2 t-\beta(x-i\gamma)}},
$$
where three-parameter $\alpha>0,~\beta>0,~\gamma\in\mathbb{R}$.
If $u_0=e^{i\theta(0)}(Q_{\alpha(0)}+v_0)\in\mathcal{S}_{\varepsilon}$ is given by
$$
u_0=Q+a_oi\partial_xQ+O_{H^1}(\varepsilon^2),\quad |a_o|=\varepsilon,
$$
it is reasonable to suppose
$$
\inf_{\gamma,\theta\in\mathbb{R},~\alpha>0}\|u_0(\cdot)-e^{i\theta}u_{\alpha,\alpha,\gamma}(0,\cdot)\|_{H^1}\sim \|u_0(\cdot)-e^{i\theta'}u_{\alpha,\alpha,\gamma'}(0,\cdot)\|_{H^1}\lesssim \varepsilon^2
$$
for appropriate $\theta',~\gamma'$ such that $\gamma'=a_o+O(\varepsilon^2)=O(\varepsilon)$.
By using the relative compactness of the potential of the linearized operator, the spectral mapping theorem and a usual estimate of the Duhamel formula of solutions, we may have
\begin{equation*}
\begin{split}
\inf_{\theta\in\mathbb{R}}\|u(t)-e^{i\theta}Q\|_{H^1} & \lesssim |\alpha-1|+|\gamma'|+\|e^{\mathcal{H}t}\|_{OP}\|u_0(\cdot)-e^{i\theta'}u_{\alpha,\alpha,\gamma'}(0,\cdot)\|_{H^1}\\
&\quad +\int_0^t\|e^{\mathcal{H}(t-t')}\|_{OP}\|\text{(nonlinear term(s))}\|_{H^1}\,dt'\\
& \lesssim \varepsilon+e^{ct}\varepsilon^2+\int_0^te^{c(t-t')}\|\text{(nonlinear term(s))}\|_{H^1}\,dt',
\end{split}
\end{equation*}
where $\mathcal{H}$ is a linearized operator and $\|\cdot\|_{OP}$ denotes the operator norm.
If there has the a priori estimate $\|\text{(nonlinear term(s))}\|_{H^1}\lesssim \varepsilon^2$, we can show Theorem \ref{thm:main} by the linear stability of \eqref{eq:nnls} around $u_{1,1,0}$.
However it is not clear that this is a superior approach, since it needs to compute the time evolution on $\gamma=\gamma(t)$ as well as $\theta,~\alpha$.
\end{remark}

\section*{Appendices: Estimates \eqref{eq:blowup}, \eqref{eq:blowup2}, \eqref{eq:data-ex} and \eqref{eq:data-diff}  }\label{secA1}

This appendix presents the formulation computing the estimates \eqref{eq:blowup}, \eqref{eq:blowup2}, \eqref{eq:data-ex} and \eqref{eq:data-diff}.
For simplicity, we only consider the case $k=0$.

First we prove \eqref{eq:blowup}.
By a straightforward computation we see that $\|u_{\alpha,\beta}(t)\|_{L^2}^2$ is
$$
\int_0^{\infty}\frac{dx}{e^{2\alpha x}|e^{i(\alpha^2-\beta^2)t}+e^{-(\alpha+\beta)x}|^2}+\int_{-\infty}^0\frac{dx}{e^{-2\beta x}|e^{i(\alpha^2-\beta^2)t+(\alpha+\beta)x}+1|^2}.
$$
For small $0<\delta\ll 1$, we have
$$
\|u_{\alpha,\beta}(t,\cdot)\|_{L^2}^2\gtrsim \int_0^{\delta}\frac{dx}{|e^{i(\alpha^2-\beta^2)t}+e^{-(\alpha+\beta)x}|^2}+\int_{-\delta}^0\frac{dx}{|e^{i(\alpha^2-\beta^2)t+(\alpha+\beta)x}+1|^2}.
$$
Then by Fatou's lemma, one has
\begin{equation*}
\begin{split}
\liminf_{t\to \pm\pi/(\alpha^2-\beta^2)}\|u_{\alpha,\beta}(t)\|_{L^2}^2
& \gtrsim   \int_0^{\delta}\liminf_{t\to \pm\pi/(\alpha^2-\beta^2)}\frac{dx}{|e^{i(\alpha^2-\beta^2)t}+e^{-(\alpha+\beta)x}|^2}\\
& \quad +\int_{-\delta}^0\liminf_{t\to \pi/(\alpha^2-\beta^2)}\frac{dx}{|e^{i(\alpha^2-\beta^2)t+(\alpha+\beta)x}+1|^2}\\
& =  \int_0^{\delta}\frac{dx}{|-1+e^{-(\alpha+\beta)x}|^2}+\int_{-\delta}^0\frac{dx}{|-e^{(\alpha+\beta)x}+1|^2},
\end{split}
\end{equation*}
which diverges to infinity.
Hence we conclude that $\|u_{\alpha,\beta}(t)\|_{L^2}$ also diverges to infinity as $t$ going to $\pm\pi/(\alpha^2-\beta^2)$.

Second, we prove \eqref{eq:blowup2}.
When $|x|\ll 1$ and $|t||\alpha-\beta|\ll 1\lesssim |t|$, one has
\begin{equation*}
\begin{split}
& |u_{\alpha,\alpha}(t,x)-e^{i\theta}u_{\alpha,\beta}(t,x)| \\
& \sim \left|\frac{2\alpha}{2+O(x^2)}-\frac{(\alpha+\beta)e^{i\theta}}{2+(\alpha-\beta)x+i(\beta^2-\alpha^2)t+O\left(x^2+((\alpha^2-\beta^2)t)^2\right)}\right|.
\end{split}
\end{equation*}
If $|\theta-2n\pi|\gtrsim 1$ for some $n\in \mathbb{Z}$, then we easily see that
$$
 |u_{\alpha,\alpha}(t,x)-e^{i\theta}u_{\alpha,\beta}(t,x)|\gtrsim 1,\quad \text{for $|x|\ll 1,~|t||\alpha-\beta|\ll 1$},
$$
then $\|u_{\alpha,\alpha}(t,\cdot)-e^{i\theta}u_{\alpha,\beta}(t,\cdot)\|_{L^2}\gtrsim 1$.
On the other hand, if $|\theta-2n\pi|\ll 1$ for some $n\in \mathbb{Z}$, we have that
\begin{equation*}
\begin{split}
& |u_{\alpha,\alpha}(t,x)-e^{i\theta}u_{\alpha,\beta}(t,x)|\\
& \sim \left|2\alpha\left(e^{\alpha x}+e^{i(\beta^2-\alpha^2)t-\beta x}-e^{i\theta}(e^{\alpha x}+e^{-\alpha x})\right)+(\alpha-\beta)e^{i\theta}(e^{\alpha x}+e^{-\alpha x})\right|\\
& \sim \left|2\alpha\left(2+(\alpha-\beta)x+i(\beta^2-\alpha^2)t-2(1+O(\theta-2n\pi))\right)\right.\\
&\qquad \left.+2(\alpha-\beta)(1+O(\theta-2n\pi))+O(x^2+(\alpha-\beta)^2t^2)\right| \\
 & \gtrsim |\alpha-\beta|| t|
\end{split}
\end{equation*}
for  $|x|\ll 1,~|t||\alpha-\beta|\ll 1\lesssim |t|$.
Taking $L^2$ norm yields the estimate \eqref{eq:blowup2}.

Thirdly, we prove \eqref{eq:data-ex} by the contradiction.
If $u_{1,\beta}(0,\cdot)\in \mathcal{S}_{\varepsilon}$ for some $0<\varepsilon\ll 1$, we have
$$
\| u_{1,\beta}(0,\cdot)-\left(Q+ia_eQ+b_eQ'+a_oi\partial_x Q\right)\|_{H^1}\lesssim \varepsilon^2.
$$
Since $u_{1,\beta}(0,\cdot),~Q,~Q'$ are real-valued functions, we have $|a_e|+|a_o|\lesssim \varepsilon^2$, which contradicts with $|a_o|=\varepsilon$.

Finally, we prove \eqref{eq:data-diff}.
The proof of Theorem \ref{thm:main} is valid.
We repeat the argument to get \eqref{eq:data-diff}.
Suppose the following:
$$
f=Q+A_eiQ+B_eQ'+A_oi\partial_xQ+O_{H^1}(\varepsilon^2)=e^{i\theta}Q+v,
$$
$$
A_e,~B_e,~A_o\in\mathbb{R},\quad |A_e|+|B_e|\le |A_o|=\varepsilon,
$$
$$
d(f,Q)\sim \|f-e^{i\theta}Q\|_{H^1}=\|v\|_{H^1}\lesssim \varepsilon.
$$
We use the symplectic decomposition to have
$$
v=a_e iQ+b_eQ'+a_oi\partial_xQ+b_oxQ+g(x),
$$
where $a_e,~b_e,~a_o,~b_o\in\mathbb{R}$ and $g\in H^1$ such that
$$
\Re\int_{\mathbb{R}}g(x)Q(x)\,dx=0,\quad \Im\int_{\mathbb{R}}g(x)Q'(x)\,dx=0,
$$
$$
\Re\int_{\mathbb{R}}g(x)\partial_xQ(x)\,dx=0,\quad \Im\int_{\mathbb{R}}g(x)xQ(x)\,dx=0.
$$
We have that $|\theta-2n\pi|\lesssim \varepsilon$ for some $n\in\mathbb{Z}$ and
$$
|A_e-a_e|+|B_e-b_e|\lesssim \varepsilon,\quad |A_o-a_o|+|b_o|+\|g\|_{H^1}\lesssim \varepsilon^2,
$$
which will be operated in the same way as before.
Then $\|v\|_{H^1}\gtrsim |a_o|\sim |A_o|=\varepsilon$, which concludes the proof of \eqref{eq:data-diff}.

\end{document}